\documentclass[review]{elsarticle}

\usepackage[T1]{fontenc}
\usepackage{hyperref}
\usepackage{amsmath}
\usepackage{amssymb}
\usepackage{graphicx}
\usepackage{color}
\hyphenation{}  

\newcommand{\F}{ \textbf{F} }
\newcommand{\C}{ \textbf{C} }
\newcommand{\p}{ \mathcal{P} }
\newcommand{\A}{ \mathcal{A} }
\newcommand{\z}{\phantom{0}}
\newcommand{\I}{ \boldsymbol{i} }
\newcommand{\q}{ \boldsymbol{q} }
\newcommand{\x}{ \boldsymbol{x} }
\newcommand{\y}{ \textbf{y} }
\newcommand{\f}{ \textbf{f} }
\newcommand{\U}{ \textbf{u} }
\newcommand{\B}{ \textbf{b} }

\newtheorem{prop}{Proposition}
\newtheorem{remark}{Remark}
\newproof{proof}{Proof}

\def\R{\color{black}} 

\numberwithin{equation}{section}
\bibliographystyle{elsarticle-num}

\begin{document}

\begin{frontmatter}

\title {A preconditioning strategy for linear systems arising
from \\ nonsymmetric schemes in isogeometric analysis}
\author{Mattia Tani}
\address{Universit\`a di Pavia, Dipartimento di Matematica ``F. Casorati'', 
via A. Ferrata 5, 27100 Pavia, Italy.}
\ead{mattia.tani@unipv.it}


\begin{abstract} 
In the context of isogeometric analysis, we consider two discretization approaches
{\R that make} the resulting stiffness matrix nonsymmetric even if the differential operator
is self-adjoint. These are the collocation method and the recently-developed
weighted quadrature for the Galerkin discretization. In this paper, we are interested
in the solution of the linear systems arising from the discretization of
the Poisson problem using one of these approaches. In [SIAM J. Sci. Comput.
38(6) (2016) pp. A3644--A3671], a well-established direct solver for linear
systems with tensor structure was used as a preconditioner in the context of
Galerkin isogeometric analysis, yielding promising results. In particular, this
preconditioner is robust with respect to the mesh size $h$ and the spline degree
$p$. In the present work, we discuss how a similar approach can applied to the
considered nonsymmetric linear systems. The efficiency of the proposed preconditioning
strategy is assessed with numerical experiments on two-dimensional
and three-dimensional problems.
\end{abstract}

\begin{keyword}
isogeometric analysis \sep preconditioning \sep collocation \sep weighted
quadrature 

\MSC[2016] 65N30 \sep 65F08 \sep 65F30
\end{keyword}

\end{frontmatter}

\section{Introduction}

Isogeometric analysis (IGA) is a method to numerically solve PDEs, which
was first proposed in \cite{Hughes2005} as an attempt to create a link between the Computer
Aided Design (CAD) and finite elements analysis. In IGA, the functions that are
used to approximate the solution of the considered PDE are B-splines and their
generalizations (e.g. NURBS). Differently from the $C^0$ piecewise polynomials
used in the finite element method, such functions can have a high regularity.
In particular, in the so-called $k$-method, which is the primary focus of this
work, $C^{p-1}$ piecewise polynomials (or rational functions in case of NURBS) of
degree $p$ are considered. One important implication of the higher regularity of
the basis functions is that they yield a higher accuracy per degree-of-freedom
when compared to standard finite elements. On the other hand, they also yield
a higher computational cost, at least in currently available software.

The computational core of any numerical solver for a linear PDE is typically
represented by a linear algebra phase, where a linear system is assembled and
then solved. In the IGA Galerkin method, the first task is considered particularly
challenging from the computational point of view. Indeed, the common
approach to assemble the system matrix, inherited by finite element codes, is to
perform a loop over the elements. On each element the local integrals are approximated
e.g. by Gaussian quadrature and then added to the global matrix.
The computational cost of the overall process, in case of maximum regularity
splines of degree $p$, is $O(N p^{3d})$ FLOPS, where $N$ is the total number of degrees
of freedom and $d$ is the dimension of the physical domain. Clearly, this cost
makes the use {\R of} high-order splines unfeasible for practical problems.

One possible strategy to work around the problem is to abandon the Galerkin
approach in favor of a collocation method \cite{Auricchio2010,Auricchio2012a,Schillinger2013}. 
Here the approximate solution is found by enforcing that the differential equation is satisfied on a given
finite set of points. One of the appealing feature of this approach is that no
quadrature is needed, and the formation of the system matrix requires only the
evaluation of the basis functions (and their derivatives) at the collocation points.

Within the Galerkin framework, in the recent years a significant number of
papers has focused on strategies that mitigate the cost of numerical quadrature,
see e.g. \cite{Hughes2010,Auricchio2012,Schillinger2014,Mantzaflaris2014,Adam2015,Johannessen2016,Mantzaflaris2016}.

A very recent strategy was presented in \cite{Calabro2017}, where the idea of weighted
quadrature (WQ) is introduced. In this approach, the matrix system is assembled
row by row, rather than element by element. For each matrix row, the test
function is incorporated into the integral measure. In this way, a row-dependent
quadrature rule is derived by imposing some exactness conditions. If this strategy
is combined with sum-factorization to exploit the tensor structure of the
basis functions, the final cost of matrix assembly is $O(N p^{d+1})$.

A common feature of the mentioned approaches, collocation and WQ Galerkin,
is that the resulting system matrix is in general nonsymmetric, even when the
underlying differential operator is self-adjoint. This lack of symmetry might
be considered an unfavorable property when it comes to solving the linear system.
This is true in particular for iterative solvers, which are the focus of this
work. As far as we know, the only preconditioners that have been employed
in the literature to tackle IGA collocation systems are: a simple yet effective
incomplete LU factorization \cite{Schillinger2013,Anitescu2015}, an Overlapping Schwarz method \cite{Veiga2014a}, and
a multi-iterative preconditioner based on the analysis of the spectral symbol of
the system matrix \cite{Donatelli2015a}.

The Fast Diagonalization (FD) method \cite{Lynch1964,Deville2002} is a well-established direct
solver for linear systems having a special tensor structure. Recently, it has
been shown \cite{Sangalli2016} that the {\R FD} method is a very efficient and robust preconditioner
for symmetric systems arising from the isogeometric discretization
of elliptic problems. This approach fully exploits the tensor structure that is
peculiar to spline basis functions.

The aim of the present work is to discuss the application of a preconditioning
strategy based on the same ideas in the nonsymmetric setting. In particular,
we discuss the spectrum of the preconditioned and unpreconditioned stiffness
matrix arising from numerical quadrature of the stiffness matrix. We prove that under a reasonable assumption on the quadrature approach, which is numerically shown to be valid in case of WQ, the spectrum of such approximated matrices {\R converges} to the spectrum of the exact matrices. This means in particular
that the proposed preconditioner, which is robust for the exact Galerkin
matrix, is also robust for the approximated one. Concerning the application of
the preconditioner, we present and discuss two popular algorithms, the Bartels-
Stewart (BS) method {\R \cite{Bartels1972}} and the nonsymmetric version of the FD method. The
discussion reveals that the latter is more suited for the particular features of
IGA problems. We report numerical experiments using the FD method as preconditioner,
which {\R show} that the proposed approach has the same appealing
features as in the symmetric setting. In particular, the number of iterations on
the preconditioned system is independent {\R of} the mesh size $h$ and {\R of} the spline
degree $p$. Moreover, the FD method in the 3D case is so fast that the effort
required for the application of the preconditioner is negligible with respect to
that of the matrix-vector product which is the fundamental step of any iterative
method.

The outline of the paper is as follows. In Section \ref{sec:preliminaries} we set the notation and
introduce the isogeometric collocation method and the weighted quadrature approach.
In Section \ref{sec:preconditioner} we define the preconditioner {\R and discuss its robustness.}
In Section \ref{sec:methods} we describe the BS and FD methods, and discuss their
computational cost. Numerical experiments with the FD preconditioner are reported
in Section 5. Finally, in Section \ref{sec:conclusions} we draw the conclusions.

\section{Preliminaries} \label{sec:preliminaries}

\subsection{The problem}

As a model problem, we consider the following Poisson problem with Dirichlet boundary
conditions:
\begin{equation}\label{poisson}
\left\{
\begin{aligned}
-\text{div} ( {K} (\mathbf{x}) \nabla \mathrm{u} (\mathbf{x}) ) = f (\mathbf{x}) &\quad \text{on} \quad \Omega, \\
\mathrm{u} = 0 & \quad \text{on} \quad \partial \Omega,
\end{aligned}
\right.
\end{equation}
where $\Omega$ is a domain of $\mathbb{R}^d$ ($d=2,3$), {\R and $K(\mathbf{x})$ is a $d \times d$ symmetric positive definite matrix for each $\mathbf{x} \in \Omega$.}

{\R Given two positive integers $p$ and $m$,} for each direction $l=1,\ldots,d$ we consider the open knot vector $\Xi_l = {\R\left\{\xi_1^{(l)}, \ldots, \xi^{(l)}_{m+p+1} \right\}}$, with
{\R
$$ 0 = \xi_1 =\ldots=\xi_{p+1} < \xi_{p+2} \le \ldots \le \xi_{m} < \xi_{m+1}=\ldots=\xi_{m+p+1} = 1. $$
where repeated knots are allowed, up to multiplicity $p$.} We denote with $\hat{B}_{l,i}$, $i = 0, 1, \ldots, m - 1$, the univariate spline basis functions of degree $p$ corresponding to the knot vector $\Xi_l$, $l = 1, \ldots,d$ (we refer to \cite[Chapter 2]{Cottrell2009} for the details). These are piecewise polynomials of degree $p$ which are $C^{p-r}$ continuous at nodes with multiplicity $r$.
While we are mainly interested in the case of maximum regularity $p-1$ (i.e. all knots have multiplicity 1 except the first and the last), this is not restrictive for the presented approach and any continuity is allowed.

Multivariate B-splines are defined by tensor product of univariate B-splines. Given a multi-index $\I = (i_1, \ldots, i_d)$, we define
\begin{equation} \label{eq:multivariatesplines} \hat{B}_{\I}(\xi) = \hat{B}_{i_1,1}\left(\xi_1\right) \ldots \hat{B}_{i_d,d} \left(\xi_d\right), \end{equation}
where $\xi = \left(\xi_1,\ldots,\xi_d\right)$. {\R To avoid proliferation of notation, we assume that all the univariate spline spaces have the same order and the same dimension. We emphasize, however, that this is not restrictive for the approaches described in the present work.}

For simplicity, we restrict to the case when $\Omega$ is given by a single-patch spline parametrization, i.e. we assume that $\Omega = \F (\left[0,1\right]^d)$, where $\F$ has the form
$$ \F(\xi) = \sum_{\I} \C_{\I} \hat{B}_{\I}(\xi), \qquad \C_{\I} \in \mathbb{R}^d $$

We consider the function search space for problem \eqref{poisson}:
\begin{equation} \label{eq:space} V_h = \left\{ \hat{B}_{\I} \circ \F^{-1} \left.\right| \I = (i_1,\ldots,i_d), \; 1 \leq i_l \leq m-2, \; 1\leq l \leq d \right\}. \end{equation}
Note that the homogeneous Dirichlet boundary conditions are incorporated in the definition of $V_h$. If we define $n := m - 2$, the number of degrees of freedom is given by $N = \mbox{dim} (V_h) = n^d$. In the following, with abuse of notation we will identify the multi-index $\I = (i_1,\ldots,i_d)$ with the scalar index ${\R i = 1+ \sum_{l=1}^d n^{l-1} (i_l - 1)}$.

\subsection{The collocation method}

We consider the collocation points $\tau_{\I}= \left(\tau_{1,i_1},\ldots,\tau_{d,i_d}\right)$ where $\tau_{l,i_l} \in \left[0,1\right]$, $i_l = 1,\ldots,n$, $l = 1,\ldots,d$ are suitably-chosen scalar points. The most common choice {\R is} the Greville abscissae, i.e.
\begin{equation} \label{eq:greville} {\R \tau_{l,i} = \frac{1}{p} \sum_{j=1}^p \xi_{i + j + 1}^{(l)}, \qquad i = 1,\ldots,n, }\end{equation}
but other choices are possible, see e.g. \cite{DeBoor1973,Demko1985,Auricchio2010,Anitescu2015,Gomez2016,Montardini2016}. If, for any $u_h \in V_h$, we define
$$ \mathcal{L} u_h := - \mbox{div}\left(K \nabla u_h\right), $$
then the approximate solution $u_h^C \in V_h$ is defined by the conditions
\begin{equation} \label{eq:collocation} \left(\mathcal{L} u_h^C\right) \left(\F \left(\tau_{\I}\right)\right) = f\left(\F \left(\tau_{\I}\right)\right), \qquad \I = 1,\ldots,N. \end{equation}
If $\U_C \in \mathbb{R}^N$ represents the vector of coefficients of $u_h^C$ with respect to the spline basis \eqref{eq:multivariatesplines}, then the conditions \eqref{eq:collocation} can be written as a linear system, that is
\begin{equation} \label{eq:collocationsystem} \mathcal{A}_C \U_C = \f_C, \end{equation}
where ${\R \f_C = \left[f(\F(\tau_1)), \ldots, f(\F(\tau_N))\right]^T} $. We emphasize that the matrix system $\mathcal{A}_C$ is not symmetric.

The expression of $\mathcal{A}_C$ in the general case is complicated, and we do not report it. However, it greatly simplifies when $\F$ is the identity function (thus in particular $\Omega = [0, 1]^d$), and $K(\mathbf{x}) \equiv I_d$. For this special case we write an explicit
expression for $\mathcal{A}_C$, as it will be useful in the following.

For $d=2$, we have
\begin{equation} \label{eq:colloc2D} \mathcal{A}_C = K_2^C \otimes M_1^C + M_2^C \otimes K_1^C, \end{equation}
where $\otimes$ denotes the Kronecker product \cite[Section 1.3.6]{Golub2012}\textsc{}, and the Kronecker factors are
defined as:
\begin{equation} \label{eq:univariatecolloc} \left(M_l^C\right)_{ij} = \hat{B}_{l,j}\left(\tau_{l,i}\right), \qquad \left(K_l^C\right)_{ij} = - \hat{B}^{''}_{l,j}\left(\tau_{l,i}\right), \qquad l = 1,2, \quad i,j = 1,\ldots,n. \end{equation}
For $d=3$ we have instead
\begin{equation} \label{eq:colloc3D} \mathcal{A}_C = K_3^C \otimes M_2^C \otimes M_1^C + M_3^C \otimes K_2^C \otimes M_1^C + M_3^C \otimes M_2^C \otimes K_1^C, \end{equation}
with similar definition of the Kronecker factors.

\subsection{Weighted quadrature}
We consider the Galerkin projection of problem \eqref{poisson} on the space $V_h$. The {\R exact stiffness matrix, denoted by $\A_G$}, has the form
\begin{eqnarray*}
\left(\mathcal{A}_G\right)_{\I\boldsymbol{j}} & = & \int_{\left[0,1\right]^d} \left(\nabla \hat{B}_{\I}\left(\xi\right)\right)^T Q(\xi) \nabla \hat{B}_{\boldsymbol{j}}\left(\xi\right) \; d \xi \\
& = & \sum_{\alpha,\beta = 1}^d \int_{\left[0,1\right]^d} {\R c_{\alpha,\beta}}\left(\xi\right) \partial_\alpha \hat{B}_{\I}\left(\xi\right)
\partial_\beta \hat{B}_{\boldsymbol{j}}\left(\xi\right) \; d \xi, \qquad \I,\boldsymbol{j} = 1,\ldots,N. \end{eqnarray*}
Here matrix $Q(\xi)$ is given by
\begin{equation} {\R \label{eq:Q} Q(\xi) = \mbox{det}\left(J_{\F}(\xi)\right) \; J_{\F}\left(\xi\right)^{-1} K\left(\F(\xi)\right) J_{\F}\left(\xi\right)^{-T}. }\end{equation}
where $J_{\F}$ is the Jacobian of $\F$, and the scalar-valued functions ${\R c_{\alpha,\beta}}$, $\alpha,\beta = 1,\ldots,d$ are the entries of $Q$.

In practice, the integrals appearing in the entries of $\mathcal{A}_G$ have to be approximated by numerical quadrature. Following the weighted quadrature approach \cite{Calabro2017}, we incorporate the function $\partial_\alpha \hat{B}_{\I} $ in the quadrature weights to get
\begin{equation} \label{eq:WQapprox}
\left(\mathcal{A}_{wq}\right)_{\I\boldsymbol{j}} := \sum_{\alpha,\beta = 1}^d \sum_{\q} {\R c_{\alpha \beta}}(\x_{{\R \q}}) \; w_{\I \q}^{\alpha \beta} \; \partial_\beta \hat{B}_{\boldsymbol{j}} (\x_{\q}) \approx \left(\A_{G}\right)_{\I \boldsymbol{j}},
\end{equation}
for some suitably-chosen quadrature points $\x_{\q}$ and weights $w_{\I \q}^{\alpha \beta}$. In \cite{Calabro2017} the weights are chosen by imposing that quadrature approximations \eqref{eq:WQapprox} are exact when the ${\R c_{\alpha \beta}}$ are constant functions. Thus in this case we have $\mathcal{A}_{wq} = \mathcal{A}_G$. We emphasize that, even if $\mathcal{A}_G$ is symmetric, the approximation $\mathcal{A}_{wq}$ is not, in general. We refer to \cite{Calabro2017} for more details.

{\R The linear system obtained with the WQ approach has the form}
\begin{equation} \label{eq:WQsystem} 
\mathcal{A}_{wq} \U_{wq} = \f_{wq}.
\end{equation}
Here $\f_{wq} {\R \in \mathbb{R}^N}$ represents a suitable approximation of the Galerkin right-hand side, which can be obtained either by using again the WQ approach, or by some different quadrature approach (e.g. Gaussian quadrature). Similarly as before, a particularly interesting case occurs when $\F$ is the identity mapping and $K(\mathbf{x}) \equiv I_d$. Note that in this case $Q(\xi) \equiv I_d$ and the quadrature formulas \eqref{eq:WQapprox} are exact. Thus for $d=2$ we have
\begin{equation} \label{eq:wq2D} \mathcal{A}_{wq} = K_2^{wq} \otimes M_1^{wq} +M_2^{wq} \otimes K_1^{wq}, \end{equation}
where
\begin{equation} \label{eq:univariateWQ} 
\left(M_l^{wq}\right)_{ij} = \int_0^1 \hat{B}_{l,i}(\xi) \hat{B}_{l,j}(\xi) \; d \xi, \qquad 
\left(K_l^{wq}\right)_{ij} = \int_0^1 \hat{B}^{'}_{l,i}(\xi) \hat{B}^{'}_{l,j}(\xi) \; d \xi,
\end{equation}
for $l = 1,2$, $i,j = 1,\ldots,n$. Note in particular that $M_1^{wq}$, $M_2^{wq}$, $K_1^{wq}$, $K_2^{wq}$ are symmetric and positive definite. For $d=3$ we have instead
\begin{equation} \label{eq:wq3D} \mathcal{A}_{wq} = K_3^{wq} \otimes M_2^{wq} \otimes M_1^{wq} + M_3^{wq} \otimes K_2^{wq} \otimes M_1^{wq} + M_3^{wq} \otimes M_2^{wq} \otimes K_1^{wq}, \end{equation}
with similar definition of the Kronecker factors.

\section{The preconditioner} \label{sec:preconditioner}

We are interested in the iterative solution of the ``abstract'' linear system
\begin{equation}
\mathcal{A} \U = \f,
\end{equation}
which can represent either the collocation system \eqref{eq:collocationsystem} or the WQ Galerkin
system \eqref{eq:WQsystem}.

Since $\mathcal{A}$ is nonsymmetric, we consider a Krylov method for nonsymmetric systems, such as GMRES \cite{Saad1986} or BICGStab \cite{VanderVorst1992}. Of course, in order to have an efficient and robust method, we need a good preconditioner.

The quality of a preconditioner $\p$ depends on two features. The first one is that a Krylov method applied to the preconditioned system should converge
fast; it is typically required that the spectrum of $\p^{-1} \mathcal{A}$ is clustered. Even though in the nonsymmetric case this does not guarantee fast convergence from a theoretical point of view 
, for practical problems a clustered spectrum is typically associated with fast convergence. We say that a preconditioner is robust if the eigenvalues of $\p^{-1} \mathcal{A}$ are bounded away from 0 and infinity by constants that do not depend on the parameters of the problem.

In the spirit of the $k-$method, we are primarily interested in a preconditioner
which is robust with respect to both the spline degree $p$ and the mesh size
$h$. The second feature of a good preconditioner is that the cost of computing
the action of $\p^{-1}$ on vectors should be low, e.g. comparable with the cost of
computing matrix-vector products with $\mathcal{A}$.

{\R In the spirit of \cite{Sangalli2016}}, we choose as preconditioner for $\mathcal{A}$ the matrix $\p$ which represents the same differential problem \eqref{poisson}, discretized with the same approach as $\mathcal{A}$ {\R (collocation or WQ Galerkin)}, but where the domain is replaced with the unit $d-$dimensional cube $[0, 1]^d$. 
{\R 
We remark that a similar strategy was already considered for collocation in \cite{Donatelli2015a}, where $\p$ is referred to as the Parametric Laplacian matrix; in that work, however, the preconditioner is applied inexactly using a multi-iterative approach.

We now specialize the notation for $\p$ in the different settings, similarly as we did for the stiffness matrix $\A$ in the previous section. In the context on collocation, the preconditioner is denoted by $\p_C$ and it is defined by either \eqref{eq:colloc2D} for a two-dimensional (2D) problem or \eqref{eq:colloc3D} for a three-dimensional (3D) problem. In the context on WQ Galerkin, the preconditioner is denoted by $\p_{wq}$ and it is expressed by \eqref{eq:colloc2D} for a 2D problem or by \eqref{eq:colloc3D} for a 3D problem. Finally, in the context of Galerkin discretization with exact integration, the preconditioner is denoted  by $\p_G$. Note that 
$\p_{wq} = \p_G$. 


In the symmetric setting, a relevant bound for the conditioning of the preconditioned system can be derived with little effort. Namely, in \cite{Sangalli2016} it was observed that, if $\lambda$ denotes any eigenvalue of $\p^{-1}_G \mathcal{A}_G$, then
}
\begin{equation} \label{eq:bound} \inf_{\xi \in \Omega} \lambda_{\min}\left(Q(\xi)\right) \leq \lambda \leq \sup_{\xi \in \Omega}  \lambda_{\max}\left(Q(\xi)\right) \end{equation}
where $Q(\xi)$ is the matrix defined in \eqref{eq:Q}, while $\lambda_{\min}\left(\cdot\right)$ and $\lambda_{\max}\left(\cdot\right)$ denote respectively the minimum and maximum eigenvalue of a symmetric matrix. Assuming that $0 < \inf_\Omega \lambda_{\min} (Q) \leq \sup_\Omega \lambda_{\max} (Q) < + \infty $, {\R this} bound guarantees that a Krylov method such as CG or GMRES converges in a number of iterations that is bounded by a constant independent of $h$ and $p$.

In the nonsymmetric setting, the spectral analysis of $\p^{-1} \mathcal{A}$ is not as simple as in the symmetric case, but it is still possible to obtain some results. In the case of collocation, we mention the symbol-based analysis presented in \cite{Donatelli2015a}, where the authors recognize that the symbol of the preconditioned matrix is bounded as in \eqref{eq:bound}. In the WQ approach, it is possible to analyze the eigenvalues of $\p^{-1}_{wq} \mathcal{A}_{wq} $
by considering {\R such matrix} a ``perturbation'' of $\p^{-1}_{G} \mathcal{A}_{G} $. This will be the topic of the next section.

\subsection{On the eigenvalues of approximated Galerkin matrices}

We consider the general setting of a symmetric and uniformly coercive bilinear
form $a(\cdot,\cdot)$ defined on the space $V_h$. Note that the bilinear form associated
with the weak formulation of problem \eqref{poisson} is
\begin{equation} \label{eq:bilinear}
a(u_h,v_h) = \int_\Omega \left( \nabla u_h \left(\mathbf{x}\right)\right)^T K(\mathbf{x}) \nabla v_h \left(\mathbf{x}\right) \; d \mathbf{x},\qquad {\R u_h, v_h \in V_h.}
\end{equation}
We consider another bilinear form, denoted by $a_h(\cdot,\cdot)$, which represents an
approximation of $a(\cdot,\cdot)$, stemming e.g. from numerical quadrature. Our results
are based on the assumption that $a_h(\cdot,\cdot)$ satisfies the following property:
\begin{equation} \label{eq:property}
\lim_{h \longrightarrow 0^+} \sup_{0 \neq v_h,w_h \in V_h} \frac{\left|a_h(w_h,v_h) - a(w_h,v_h)\right|}{\left\|w_h\right\|_{H^1(\Omega)} \left\|v_h\right\|_{H^1(\Omega)}} = 0. 
\end{equation}
Our first result refers to the eigenvalues of the approximated stiffness matrix.

\begin{prop} \label{prop1}
Let $a : V_h \times V_h \longrightarrow \mathbb{R}$ be a symmetric and uniformly coercive bilinear form, and let $a_h : V_h \times V_h \longrightarrow \mathbb{R}$ that satisfies property \eqref{eq:property}. Let $A$ and $A_*$ be $N \times N$ matrices associated to $a(\cdot,\cdot)$ and $a_h(\cdot,\cdot)$, respectively, with respect to any basis of $V_h$. If $\Lambda$ and $\Lambda_*$ denote respectively the spectra of $A$ and $A_*$, then we have
$$ \lim_{h \longrightarrow 0^+} \max_{\lambda_* \in \Lambda_*} \min_{\lambda \in \Lambda} \frac{\left|\lambda - \lambda_*\right|}{\lambda} = 0. $$
\end{prop}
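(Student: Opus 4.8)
The plan is to translate the analytic smallness condition \eqref{eq:property} into a single spectral-norm bound and then run a standard perturbation argument on the eigenvalue equation. First I would fix a basis $\{\phi_i\}$ of $V_h$ and write $a(w_h,v_h) = \mathbf{w}^T A \mathbf{v}$ and $a_h(w_h,v_h) = \mathbf{w}^T A_* \mathbf{v}$ for the coordinate vectors $\mathbf{w},\mathbf{v}$ of $w_h,v_h$. Setting $E := A_* - A$, property \eqref{eq:property} is exactly a statement about $|\mathbf{w}^T E \mathbf{v}|$ measured against the $H^1$-norms. Since $a$ is symmetric, continuous and uniformly coercive, the energy norm $\|w_h\|_A^2 := a(w_h,w_h) = \mathbf{w}^T A \mathbf{w}$ is equivalent to $\|w_h\|_{H^1(\Omega)}$ with constants independent of $h$; in particular $A$ is symmetric positive definite and $A^{1/2}$ is well defined. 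Introducing $\widetilde{E} := A^{-1/2} E A^{-1/2}$ and the substitutions $\mathbf{p} = A^{1/2}\mathbf{w}$, $\mathbf{r} = A^{1/2}\mathbf{v}$, one checks the exact identity
$$ \|\widetilde{E}\|_2 = \sup_{\mathbf{p},\mathbf{r} \neq 0} \frac{|\mathbf{p}^T \widetilde{E}\mathbf{r}|}{\|\mathbf{p}\|_2\|\mathbf{r}\|_2} = \sup_{0 \neq w_h,v_h \in V_h} \frac{|a_h(w_h,v_h)-a(w_h,v_h)|}{\|w_h\|_A\|v_h\|_A}, $$
so that, up to the fixed coercivity constant, $\|\widetilde{E}\|_2 \to 0$ as $h \to 0^+$ by \eqref{eq:property}. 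This quantity is manifestly basis-independent, which is what lets me dispose of ``any basis'' at once.

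Next I would relate the spectrum of $A_*$ to that of $A$ through $\widetilde{E}$. Multiplying the eigenvalue equation $(A_* - \lambda_* I)\mathbf{x} = 0$ on the left by $A^{-1/2}$ and substituting $\mathbf{x} = A^{-1/2}\mathbf{y}$ shows that $\lambda_* \in \Lambda_*$ if and only if $I - \lambda_* A^{-1} + \widetilde{E}$ is singular. Diagonalizing $A^{-1} = U D U^T$ with $D = \mathrm{diag}(1/\lambda_1,\ldots,1/\lambda_N)$, $\lambda_i \in \Lambda$, the unperturbed factor $M(\lambda_*) := I - \lambda_* A^{-1}$ is invertible precisely when $\lambda_* \neq \lambda_i$ for all $i$, and then $\|M(\lambda_*)^{-1}\|_2 = \max_i \lambda_i/|\lambda_i - \lambda_*|$. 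A Neumann-series (invertible-perturbation) argument applied to $M(\lambda_*) + \widetilde{E} = M(\lambda_*)\bigl(I + M(\lambda_*)^{-1}\widetilde{E}\bigr)$ then yields a dichotomy: either $\lambda_*$ coincides with some $\lambda_i$, or singularity forces $\|M(\lambda_*)^{-1}\|_2 \|\widetilde{E}\|_2 \ge 1$. In both cases $\min_{\lambda \in \Lambda} |\lambda - \lambda_*|/\lambda \le \|\widetilde{E}\|_2$. Taking the maximum over $\lambda_* \in \Lambda_*$ and letting $h \to 0^+$ gives the claim.

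The main obstacle, and the point requiring the most care, is choosing the \emph{right} norm in which $E$ is small. The naive quantity $\|E\|_2$ need not tend to zero, since it scales with the magnitude of the eigenvalues of $A$; what \eqref{eq:property} actually controls is the energy-weighted norm $\|A^{-1/2}EA^{-1/2}\|_2$, and it is exactly this weighting that is matched by the \emph{relative} eigenvalue error $|\lambda-\lambda_*|/\lambda$ in the statement. Two further points must be handled cleanly. First, $A_*$ is nonsymmetric, so $\lambda_*$ may be complex; however the argument only uses the moduli of the diagonal entries $1 - \lambda_*/\lambda_i$, so it goes through verbatim over $\mathbb{C}$. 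Second, the \emph{uniform} ($h$-independent) coercivity is precisely what keeps the constant relating $\|\widetilde{E}\|_2$ to \eqref{eq:property} from degenerating as $h \to 0^+$, which is what makes the limit meaningful; the basis-independence of $\|\widetilde{E}\|_2$ then guarantees the estimate holds for every basis simultaneously.
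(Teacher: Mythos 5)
Your proof is correct. Up to the point where you identify $\|A^{-1/2}EA^{-1/2}\|_2$ with the energy-normalized supremum of $|a_h-a|$ and invoke uniform coercivity to pass to the $H^1$-normalized quantity in \eqref{eq:property}, your argument coincides with the paper's. The genuine difference is in how the key inequality $\max_{\lambda_*\in\Lambda_*}\min_{\lambda\in\Lambda}|\lambda-\lambda_*|/\lambda \le \|A^{-1/2}EA^{-1/2}\|_2$ is obtained: the paper simply cites it (Theorem 2.3 of Ipsen's survey on relative perturbation theory), whereas you re-derive it from scratch via the factorization $M(\lambda_*)+\widetilde E = M(\lambda_*)\bigl(I+M(\lambda_*)^{-1}\widetilde E\bigr)$ and the observation that singularity of the perturbed matrix forces $\|M(\lambda_*)^{-1}\|_2\,\|\widetilde E\|_2\ge 1$. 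This is precisely the Bauer--Fike-type argument underlying the cited result, and your treatment of the two delicate points --- complex $\lambda_*$ handled through the moduli of the diagonal entries of $I-\lambda_* D$ under a real orthogonal similarity, and the degenerate case $\lambda_*\in\Lambda$ --- is sound, as is your remark that the energy-weighted norm (not $\|E\|_2$ itself) is the right quantity and is basis-independent. What your version buys is a self-contained proof; what the paper's buys is brevity. One cosmetic point: you invoke continuity of $a(\cdot,\cdot)$ to claim norm \emph{equivalence}, but only the one-sided coercivity bound $a(w_h,w_h)\ge \alpha\|w_h\|^2_{H^1(\Omega)}$ is actually used, so continuity can be dropped, in keeping with the proposition's hypotheses.
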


\begin{proof}
Let $E := A - A_*$. It holds that (see \cite[Theorem 2.3]{Ipsen1998})
$$ \max_{\lambda_* \in \Lambda_*} \min_{\lambda \in \Lambda} \frac{\left|\lambda - \lambda_*\right|}{\lambda} \leq \left\|A^{-1/2} E A^{-1/2}\right\|,$$
where, here and throughout, $\left\|\cdot\right\|$ denotes the (matrix or vector) 2-norm. It holds that
$$ \left\|A^{-1/2} E A^{-1/2}\right\| = \max_{0 \neq v,w \in \mathbb{R}^N} \frac{v^T A^{-1/2} E A^{-1/2} w}{\left\|v\left\|\right\|w\right\|} = \sup_{0 \neq w_h, v_h \in V_h} \frac{\left|a_h(w_h,v_h) - a(w_h,v_h)\right|}{\sqrt{a(v_h,v_h) \; a(w_h,w_h)}}. $$
Since $a(\cdot,\cdot)$ is uniformly coercive, there exits a constant $c > 0$ independent of $h$ such that
$$ \max_{\lambda_* \in \Lambda_*} \min_{\lambda \in \Lambda} \frac{\left|\lambda - \lambda_*\right|}{\lambda} \leq c  \sup_{0 \neq v_h,w_h \in V_h} \frac{\left|a_h(w_h,v_h) - a(w_h,v_h)\right|}{\left\|w_h\right\|_{H^1(\Omega)} \left\|v_h\right\|_{H^1(\Omega)}} .$$
The sought-after result then immediately follows from \eqref{eq:property}.
\end{proof}

The second result, which is proven using similar arguments, addresses the
eigenvalues of the preconditioned matrix.

\begin{prop} \label{prop2}
Let $a(\cdot,\cdot)$, $a_h(\cdot,\cdot)$, $A$ and $A_*$ {\R be} as in the statement of Proposition
\ref{prop1}. Moreover, let $P \in \mathbb{R}^{N \times N}$ be symmetric and positive definite, and assume that
the eigenvalues of $P^{-1}A$ are bounded away from $0$ independently of $h$. If $\Lambda$ and
$\Lambda_*$ denote respectively the spectra of $P^{-1}A$ and $P^{-1}A_*$, then we have
\begin{equation} \label{eq:lim} \lim_{h \longrightarrow 0^+} \max_{\lambda_* \in \Lambda_*} \min_{\lambda \in \Lambda} \left| \lambda - \lambda_* \right|= 0 .\end{equation}
\end{prop}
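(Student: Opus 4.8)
The plan is to reduce the two generalized eigenvalue problems governing $\Lambda$ and $\Lambda_*$ to standard ones and then reuse Proposition \ref{prop1} almost verbatim. Since $P$ is symmetric positive definite, conjugation by $P^{1/2}$ shows that $P^{-1}A$ and $P^{-1}A_*$ have the same spectra as
$$ \tilde A := P^{-1/2} A P^{-1/2}, \qquad \tilde A_* := P^{-1/2} A_* P^{-1/2}, $$
respectively, because $P^{1/2}(P^{-1}A)P^{-1/2} = P^{-1/2}AP^{-1/2}$ and similarly for $A_*$. The matrix $\tilde A$ is symmetric, indeed positive definite since $A$ is, whereas $\tilde A_*$ need not be. Hence $\Lambda = \mathrm{spec}(\tilde A) \subset (0,+\infty)$ and $\Lambda_* = \mathrm{spec}(\tilde A_*)$.

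Next I would observe that $\tilde A$ and $\tilde A_*$ are exactly the matrices representing $a(\cdot,\cdot)$ and $a_h(\cdot,\cdot)$ with respect to the basis of $V_h$ obtained by applying $P^{-1/2}$ to the original basis, which is orthonormal in the inner product induced by $P$. Since $a(\cdot,\cdot)$ is still symmetric and uniformly coercive and $a_h(\cdot,\cdot)$ still satisfies \eqref{eq:property} (both are statements at the function-space level, independent of the chosen basis), Proposition \ref{prop1} applies to the pair $(\tilde A, \tilde A_*)$ and yields the relative estimate
$$ \lim_{h \to 0^+} \varepsilon_h = 0, \qquad \varepsilon_h := \max_{\lambda_* \in \Lambda_*} \min_{\lambda \in \Lambda} \frac{|\lambda - \lambda_*|}{\lambda}. $$
I emphasize that this step uses only the coercivity of $a$, and no spectral assumption on $P^{-1}A$.

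The remaining task is to upgrade this relative bound to the absolute bound \eqref{eq:lim}, and this is where I expect the main obstacle. For a fixed $\lambda_* \in \Lambda_*$, let $\lambda' \in \Lambda$ attain the inner minimum; then
$$ \min_{\lambda \in \Lambda} |\lambda - \lambda_*| \le |\lambda' - \lambda_*| = \lambda' \, \frac{|\lambda' - \lambda_*|}{\lambda'} \le \lambda' \, \varepsilon_h \le \lambda_{\max}(P^{-1}A)\, \varepsilon_h. $$
Taking the maximum over $\lambda_*$, the right-hand side tends to $0$ \emph{provided} $\lambda_{\max}(P^{-1}A)$ stays bounded as $h \to 0^+$. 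Thus the crux is that the scale-invariant information in \eqref{eq:property} pins down $\lambda_*$ only up to the multiplicative factor of a reference eigenvalue, so some two-sided control of $\Lambda$ is indispensable: a $2\times 2$ example with $\lambda_{\max}(P^{-1}A) \to \infty$ (while the eigenvalues stay bounded away from $0$) shows the absolute statement can genuinely fail. In the setting of interest this control is furnished by the bound \eqref{eq:bound}, which confines the spectrum of $\p_G^{-1}\A_G$ to the fixed interval $[\inf_\Omega \lambda_{\min}(Q),\, \sup_\Omega \lambda_{\max}(Q)]$, so I would invoke the hypothesis on the spectrum of $P^{-1}A$ precisely here.

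A slightly more direct variant avoids the relative estimate: since $\tilde A$ is normal, the Bauer--Fike theorem gives $\max_{\lambda_*}\min_\lambda |\lambda-\lambda_*| \le \|\tilde A - \tilde A_*\| = \|P^{-1/2} E P^{-1/2}\|$ with $E := A - A_*$, after which one rewrites this norm as a supremum of $|a_h - a|$ over $V_h$ with $P$-weighted denominators and compares the $P$-norm with $\|\cdot\|_{H^1(\Omega)}$. Either way the obstacle is identical, namely that passing from \eqref{eq:property} to an absolute spectral distance requires the energy norm and the $P$-norm to be equivalent uniformly in $h$, which is exactly the uniform spectral boundedness of $P^{-1}A$.
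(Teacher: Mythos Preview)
Your ``direct variant'' is precisely the paper's proof: reduce to $\tilde A=P^{-1/2}AP^{-1/2}$ and $\tilde A_*=P^{-1/2}A_*P^{-1/2}$ by similarity, apply Bauer--Fike (since $\tilde A$ is symmetric) to obtain $\max_{\lambda_*}\min_\lambda|\lambda-\lambda_*|\le\|P^{-1/2}EP^{-1/2}\|$, rewrite this norm with $P$-weighted denominators, and then compare the $P$-norm with the $H^1$-norm to invoke \eqref{eq:property}. Your first route, reusing Proposition~\ref{prop1} after the change of basis and then converting the relative estimate to an absolute one via $\lambda_{\max}(P^{-1}A)$, is a pleasant alternative that avoids re-deriving the perturbation bound; the paper does not take this shortcut.

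You are also right about the hypothesis. The step the paper actually performs is to assert $v^TPv\ge c\,\|v_h\|_{H^1(\Omega)}^2$ uniformly in $h$; together with coercivity of $a$ this follows from $\lambda_{\max}(P^{-1}A)$ being bounded (equivalently $A\le c_4 P$), not from $\lambda_{\min}(P^{-1}A)$ being bounded away from $0$. The proof's wording ``the boundedness of the eigenvalues of $P^{-1}A$'' is what is really used, and your $2\times 2$ remark shows the lower-bound hypothesis alone does not suffice. In the intended application both bounds are supplied by \eqref{eq:bound}, so the discrepancy is harmless there; but your reading of which spectral bound is doing the work is the correct one.
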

\begin{proof}
We prove that the eigenvalues of $P^{-1/2} A P^{-1/2}$ converge to those of $P^{-1/2} A_* P^{-1/2}$. Then \eqref{eq:lim} immediately follows from a similarity argument. As before, if we let $E := A - A_*$, it holds that (see \cite[Theorem III]{Bauer1960})
$$ \max_{\lambda_* \in \Lambda_*} \min_{\lambda \in \Lambda} \left| \lambda - \lambda_* \right| \leq \left\|P^{-1/2} E P^{-1/2}\right\|.$$
We have
$$ \left\|P^{-1/2} E P^{-1/2}\right\| = \max_{0 \neq v,w \in \mathbb{R}^N} \frac{v^T P^{-1/2} E P^{-1/2} w}{\left\|v\right\|\left\|w\right\|} = \max_{0 \neq v,w \in \mathbb{R}^N} \frac{v^T E w}{\sqrt{v^T P v \; w^T P w}}. $$
The boundedness of the eigenvalues of $P^{-1}A$ and the uniform coercivity of $a(\cdot,\cdot)$ guarantee that there exists a constant $c > 0$ independent of $h$ such that for any $v = [v_1,\ldots,v_N]^T \in \mathbb{R}^N$ it holds that
$$ v^T P v \geq c \left\|v_h\right\|_{H^1(\Omega)}^2, \qquad \mbox{where } 
{\R 
v_h = \sum_{\I = 1}^N v_{\I} \hat{B}_{\I}\left( \F^{-1}(\mathbf{x})\right). 
}$$
Thus, we have
$$ \max_{\lambda_* \in \Lambda_*} \min_{\lambda \in \Lambda} \left| \lambda - \lambda_* \right| \leq c \sup_{0 \neq v_h,w_h \in V_h} \frac{\left|a_h(w_h,v_h) - a(w_h,v_h)\right|}{\left\|w_h\right\|_{H^1(\Omega)} \left\|v_h\right\|_{H^1(\Omega)}}. $$
The sought-after result then immediately follows from (3.4).
\end{proof}

Proposition \ref{prop2} states in particular that if $P$ is a robust preconditioner for the exact Galerkin matrix, then $P$ is still a robust preconditioner for the approximated matrix. Thus, if \eqref{eq:property} is satisfied by the WQ approach, then when solving iteratively the preconditioned system $\p_{wq}^{-1} \mathcal{A}_{wq}$ we can expect the same convergence behavior that is observed with the exact Galerkin matrix.

However, proving the validity of \eqref{eq:property} in the case of weighted quadrature is beyond the scope of this work. Indeed, this property is linked to the optimal order of convergence of the WQ approach, and this will be the topic of a
forthcoming paper. Instead, we give numerical evidence that \eqref{eq:property} holds in our setting. 

{\R Let $H \in \mathbb{R}^{N \times N}$ be the symmetric and positive definite matrix that represents the standard $H^1$-norm on $V_h$ in coordinates with respect to the chosen spline basis, i.e. for any $v = (v_1, \ldots, v_N) \in \mathbb{R}^N$ it holds
$$ v^T H v = \left\|v_h\right\|_{H^1(\Omega)}^2, \qquad \mbox{where } v_h = \sum_{\I = 1}^N v_{\I} \hat{B}_{\I}\left( \F^{-1}(\mathbf{x})\right).  $$
In Table \ref{tab:eh} we report the number
$$ e_h := \left\| H^{-1/2} E H^{-1/2} \right\| = \sup_{0 \neq w_h,v_h \in V_h} \frac{\left|a_h(w_h,v_h) - a(w_h,v_h)\right|}{\left\|w_h\right\|_{H^1(\Omega)}\left\|v_h\right\|_{H^1(\Omega)}}, $$}
where as before $\left\|\cdot\right\|$ denotes the 2-norm of a matrix, for different values of $h$ and $p$ in the case of the quarter of ring domain (see Section \ref{sec:tests}) and with $K(\mathbf{x}) \equiv I_d$. It is clear from these values that $e_h = O(h)$ for all $p$. A similar behavior is observed using different domains.

\begin{table}
\renewcommand{\arraystretch}{1.1}
\begin{center}
\begin{tabular}{|c|c|c|c|c|}
\hline
& $h = 1/8$ & $h = 1/16$ & $h = 1/32$ & $h = 1/64$ \\
\hline
$p=2$ & $2.66 \cdot 10^{-2}$ & $1.41 \cdot 10^{-2}$ & $7.21 \cdot 10^{-3}$ & $3.65 \cdot 10^{-3}$ \\
\hline
$p=3$ & $2.40 \cdot 10^{-2}$ & $1.26 \cdot 10^{-2}$ & $6.43 \cdot 10^{-3}$ & $3.25 \cdot 10^{-3}$ \\
\hline
$p=4$ & $1.38 \cdot 10^{-2}$ & $7.18 \cdot 10^{-3}$ & $3.67 \cdot 10^{-3}$ & $1.86 \cdot 10^{-3}$ \\
\hline
\end{tabular} 
\caption{Values of $e_h$ for different $h$ and $p$ for the quarter of ring domain and $K(\mathbf{x}) \equiv I_d$.}
\label{tab:eh}
\end{center}
\end{table}

{\R
\subsection{Preconditioning for inherently nonsymmetric problems}

Although the focus of this work is on self-adjoint problems which become nonsymmetric on the algebraic level due to the effects of discretization, we now briefly describe how the preconditioning approach discussed in this section can be adapted to inherently nonsymmetric problems, such as convection-diffusion problems. This case is not further explored in this paper.

Consider the nonsymmetric variant of problem \eqref{poisson} in which the differential operator is replaced with 
\begin{equation} \label{eq:convection} \mathcal{L}_{\textbf{w}} := - \Delta + \textbf{w} \cdot \nabla  . \end{equation}
Here $\textbf{w} = (w_1, \ldots, w_d ) : \Omega \longrightarrow \mathbb{R}^d $ represents the vector-valued ``wind''.

Applying the ideas presented in this paper to build a preconditioner for this problem requires the approximation of $\mathcal{L}_{\textbf{w}}$ with a simpler operator. We emphasize that this approach has already been considered in the context of finite differences;
we refer the interested reader to \cite{Elman1986,Manteuffel1993,Palitta2016} and references therein. 

Consider an isogeometric Galerkin discretization of \eqref{eq:convection}, and assume for simplicity that the integrals are approximated with standard Gaussian quadrature. One approach is to build the preconditioner from $\mathcal{L}_{\textbf{w}}$ by simply discarding the convective term; in other words, $\mathcal{L}_{\textbf{w}}$ is approximated with the (negative) Laplacian. Discretization on the reference domain yields
\begin{equation} \label{eq:convec_prec_1}
\begin{array}{l} \p = K_2 \otimes M_1 + M_2 \otimes K_1, \\
\p = K_3 \otimes M_2 \otimes M_1 + M_3 \otimes K_2 \otimes M_1 + M_3 \otimes M_2 \otimes K_1, \end{array}
\end{equation}
respectively for $d=2$ and $d=3$, where
\begin{equation} \label{eq:univariateG} \left( K_l \right)_{ij} = \int_0^1  \hat B'_i(\zeta_l)  \hat B'_j(\zeta_l) \;  d  \zeta_l , \qquad \left( M_l \right)_{ij} = \int_0^1  \hat B_i(\zeta_l)  \hat B_j(\zeta_l) \;  d  \zeta_l,\end{equation}
for $i,j = 1,\ldots,n$, $l = 1,\ldots,d$. Then all the Kronecker factors of $\p$ are symmetric and positive definite, and the action of $\p^{-1}$ on a vector can be computed using the symmetric FD method as described in \cite{Sangalli2016}.
To improve the effectiveness of this strategy one can include partial information on the wind in the preconditioner. For example, consider a univariate approximation for the components of $\textbf{w} \circ \F$, namely
$$ \left(w_l \circ \F\right)(\xi_1,\ldots,\xi_d) \approx \tilde{w}_l(\xi_l), \qquad l = 1,\ldots,d,$$
and define the approximated operator $\mathcal{L}_{\widetilde{\textbf{w}}} = - \Delta + \widetilde{\textbf{w}} \cdot \nabla $, with $\widetilde{\textbf{w}} = (\tilde{w}_1, \ldots, \tilde{w}_d )$. By discretizing this operator on the reference domain, we obtain the preconditioner
\begin{equation} \label{eq:convec_prec_2}
\begin{array}{l} \p =\left(K_2 + H_2\right) \otimes M_1 + M_2\otimes \left(K_1 + H_1\right), \\
\p = \left(K_3 + H_3\right) \otimes M_2 \otimes M_1 + M_3\otimes \left(K_2 + H_2\right) \otimes M_1 + M_3\otimes M_2 \otimes \left(K_1 + H_1\right), \end{array}
\end{equation}
respectively for $d=2$ and $d=3$, where $K_l$, $M_l$, $l = 1,\ldots,d$, are given by \eqref{eq:univariateG} and
$$ \left(H_l\right)_{ij} = \int_0^1 \tilde{w}_l(\zeta_l) \hat B_i(\zeta_l)  \hat B'_j(\zeta_l) \;  d  \zeta_l \qquad i,j = 1,\ldots,n \quad l = 1,\ldots,d,$$ 
Note that these integrals cannot in general be computed exactly, due to the presence of $\tilde{w}_l$, however for the sake of simplicity we ignore the (small) quadrature error.
We emphasize that \eqref{eq:convec_prec_2} is identical to \eqref{eq:convec_prec_1}, with the only difference that the factors $K_l$ are replaced with the nonsymmetric matrices $K_l + H_l$. It follows that the algorithms described in the next section can be used to apply $\p^{-1}$ on a vector.
}

\section{The BS and FD methods for nonsymmetric systems} \label{sec:methods}

We now turn on the fundamental problem of efficiently applying the preconditioner. At each iteration of a Krylov method the preconditioning step requires the solution of a linear system of the form
\begin{equation} \label{eq:precsys}
\p \textbf{s} = \B,
\end{equation}
with $\B \in \mathbb{R}^N$. We first consider the 2D case; system \eqref{eq:precsys} takes the form
\begin{equation} \label{eq:precsys2D}
\left(K_2 \otimes M_1 + M_2 \otimes K_1\right) \textbf{s} = \B,
\end{equation}
where the Kronecker factors can represent either the collocation matrices \eqref{eq:univariatecolloc},
or the WQ Galerkin matrices \eqref{eq:univariateWQ}.

In \cite{Sangalli2016}, a linear system analogous to \eqref{eq:precsys2D} is considered, where the Kronecker factors $M_1$, $M_2$, $K_1$ and $K_2$ are symmetric and positive definite. Two well established methods, one iterative and one direct are discussed and compared. The results show that the direct solver, namely the FD method, is more suited than the other for solving IGA problems, in terms of plain efficiency. In fact, in the numerical experiments of \cite{Sangalli2016} it is shown that the application of the FD preconditioner is even faster than the matrix-vector product with $\A$, which is
an unavoidable step at each iteration of a Krylov method. 

Recall that in the WQ approach the Kronecker factors of \eqref{eq:precsys2D} are symmetric and positive definite. Thus the FD method can be applied to \eqref{eq:precsys2D} exactly
as described in \cite{Sangalli2016}. On the other hand, in the case of collocation the Kronecker factors are no longer symmetric. In the rest of this section we discuss how the FD method can be extended to the nonsymmetric case.

A popular way to solve a Sylvester equation like \eqref{eq:precsys2D} is the BS method \cite{Bartels1972}.
Since in our problem the matrices $M_1$ and $M_2$ are well-conditioned, we first simplify system \eqref{eq:precsys2D} by premultiplying it by $\left(M_2 \otimes M_1\right)^{-1}$ to get
\begin{equation} \label{eq:precsys2Dbis}
\left(M_2^{-1} K_2 \otimes I_n + I_n \otimes M_1^{-1} K_1\right) \textbf{s} = \left(M_2 \otimes M_1\right)^{-1} \B.
\end{equation}
Assume for a moment that the eigenvalues of $M_1^{-1} K_1$ are real. Then there exist an orthogonal matrix $Q_1$ and an upper triangular matrix $R_1$ such that
\begin{equation} \label{eq:schur}
Q_1^T M_1^{-1} K_1 Q_1 = R_1.
\end{equation}
If $M_1^{-1} K_1$ has complex eigenvalues, then the above factorization is still possible in real arithmetic, with the difference that $R_1$ is block upper triangular with $1 \times 1$ and $2 \times 2$ diagonal blocks. The factorization \eqref{eq:schur} is called Schur decomposition. If we consider an analogous factorization for the pencil $(K_2,M_2)$, then we can factorize the whole
\eqref{eq:precsys2Dbis} as
$$ (Q_2 \otimes Q_1) \left(R_2 \otimes I_n + I_n \otimes R_1\right) (Q_2 \otimes Q_1)^T \textbf{s} = {\R \left(M_2 \otimes M_1\right)^{-1} \B}. $$
By inverting this equation {\R and introducing the matrices $G_l:= M_l^{-T} Q_l $, $l=1,2$}, we obtain:
\begin{equation} \label{eq:BS2D}
\textbf{s} = (Q_2 \otimes Q_1) \left(R_2 \otimes I_n + I_n \otimes R_1\right)^{-1} {\R (G_2 \otimes G_1)^T} \B.
\end{equation}
Thus, the solution s can be computed in the following four steps:
\begin{enumerate}
\item Compute the Schur decompositions $Q_l^T M_l^{-1} K_l Q_l = R_l$ {\R and matrices $G_l = M_l^{-T} Q_l$,} $l=1,2$.
\item Compute the matrix-vector product $\tilde{\B} = {\R \left(G_2 \otimes G_1\right)^T} \B $.
\item Solve the system $\left(R_2 \otimes I_n + I_n \otimes R_1\right) \tilde{\textbf{s}}  = \tilde{\B} $.
\item Compute the matrix-vector product $\textbf{s} = \left(Q_2 \otimes Q_1\right) \tilde{\textbf{s}} $.
\end{enumerate}
We emphasize that the matrix-vector products involving $Q_2 \otimes Q_1$ and {\R $\left(G_2 \otimes G_1\right)^T$} can be computed efficiently by exploiting the properties of the Kronecker product. The system involving $(R_2 \otimes I_n + I_n \otimes R_1)$ can be solved by block backsubstitution; see the next section for details.

If the matrices $M^{-1}_l K_l$, $l = 1,\ldots,d$, are diagonalizable (see Remark 1), another option is available. In this case we can apply to \eqref{eq:precsys2D} {\R the nonsymmetric version of the FD method}. We mention that his approach has been pursued also in the context of spectral methods, see e.g. \cite[Chapter 4]{Canuto2006} and references therein.

Consider the eigendecomposition
\begin{equation} \label{eq:eigendecomposition}
M_l^{-1} K_l U_l = U_l D_l, \qquad l=1,\ldots,d,
\end{equation}
where the columns of $U_l$ form a basis of eigenvectors for $M^{-1}_l K_l$ and $D_l$ is the diagonal matrix whose entries are the eigenvalues of $M^{-1}_l K_l$. 

{\R We now define $V_l := (M_l U_l)^{-T}$. Note that in the symmetric case, that is when $K_l$ is symmetric and $M_l$ is symmetric and positive definite, we can assume that the columns of $U_l$ are $M_l$-orthonormal, which implies $V_l = U_l$; in this way we recover the symmetric FD method presented in \cite{Sangalli2016}.} It holds that
$$ M_l = V_l^{-T} U_l^{-1}, \qquad K_l = V_l^{-T} D_l U_l^{-1}, \qquad l=1,\ldots,d, $$
which we can use to factorize \eqref{eq:precsys2D} as
$$ \left(V_2 \otimes V_1\right)^{-T} \left(D_2 \otimes I_n + I_n \otimes D_1\right) \left(U_2 \otimes U_1\right)^{-1} \textbf{s} = \B. $$
By inverting this equation, we obtain:
\begin{equation} \label{eq:FD2D}
\textbf{s} = \left(U_2 \otimes U_1\right) \left(D_2 \otimes I_n + I_n \otimes D_1\right)^{-1} \left(V_2 \otimes V_1\right)^{T} \B.
\end{equation}
which suggests that the solution s can be computed in four steps which are analogous to the ones for the BS method. Note, however, that in this case the solution of the system at step 3 is just a diagonal scaling.

We remark that if $M^{-1}_l K_l$ has complex eigenvalues, then the eigendecomposition \eqref{eq:eigendecomposition} can still be performed in real arithmetic by allowing $D_l$ to be block diagonal with $1 \times 1$ and $2 \times 2$ diagonal blocks.

We now turn on the 3D case, which is more interesting since 3D problems are considered more challenging from the computational point of view. In this case system \eqref{eq:precsys} takes the form
$$ \left(K_3 \otimes M_2 \otimes M_1 + M_3 \otimes K_2 \otimes M_1 + M_3 \otimes M_2 \otimes K_1\right) \textbf{s} = \B.$$
The BS and FD methods can be easily generalized to this case. Indeed, if we premultiply the above equation for $(M_3 \otimes M_2 \otimes M_1)^{-1}$ and consider the factorization \eqref{eq:schur} for the matrix pencils $(K_l,M_l)$, $l = 1,2,3$, then we obtain an equivalent of \eqref{eq:BS2D}, i.e.
\begin{equation} \label{eq:BS3D}
\textbf{s} = (Q_3 \otimes Q_2\otimes Q_1) \left(R_3 \otimes I_n \otimes I_n + I_n \otimes R_2 \otimes I_n + I_n \otimes I_n \otimes R_1 \right)^{-1} {\R (G_3 \otimes G_2 \otimes G_1)^T }\B.
\end{equation}
Similarly, if the factorization \eqref{eq:eigendecomposition} exists for $l = 1,2,3$, then we obtain an equivalent of \eqref{eq:FD2D}
\begin{equation} \label{eq:FD3D}
\textbf{s} = (U_3 \otimes U_2\otimes U_1) \left(D_3 \otimes I_n \otimes I_n + I_n \otimes D_2 \otimes I_n + I_n \otimes I_n \otimes D_1 \right)^{-1} (V_3 \otimes V_2 \otimes V_1)^T \B.
\end{equation}

\begin{remark}
In the setting of collocation and WQ Galerkin, we have no guarantee that matrix $M^{-1}_l K_l$ is diagonalizable, so in principle the matrix $U_l$ appearing in \eqref{eq:eigendecomposition} could be singular or very ill-conditioned. We remark in particular that the matrices $M^{-1}_l K_l$ have an ``outlier'' eigenvalue with algebraic multiplicity 2. However, in all our numerical tests for collocation, performed by choosing \eqref{eq:greville} as collocation points and using uniform knot vectors, we observed that $\left(M^C_l\right)^{-1} K^C_l$ is diagonalizable with well-conditioned eigenvector matrix. Moreover, its eigenvalues are all real. The reality of the eigenvalues has been proved for the case of cubic Hermite ($C^1$-continuous) splines in \cite{Sun1999}, for any choice of the collocation points. To the best
of our knowledge, no proof for more general cases exists. As for WQ Galerkin, we similarly observed in all our numerical tests that ($M^{wq}_l )^{-1} K^{wq}_l$ is diagonalizable (with well-conditioned eigevector matrix) and has real eigenvalues. 
In view of these considerations, in the following we will always assume that the matrices $M^{-1}_l K_l$ are diagonalizable with real eigenvalues.
\end{remark}

\subsection{Computational cost}

We now discuss the computational cost of the approaches described in the previous section. Since the solution of system \eqref{eq:precsys} is a preconditioning step, we distinguish between the cost for the setup of the preconditioner, which has
to be paid only once, and the cost for its application, which has to be paid at each iteration of the chosen iterative method.

We start by considering the 2D case. In the BS method, the setup of the preconditioner requires the computation of the Schur decompositions \eqref{eq:schur}; the cost of each of them can be estimated roughly as $25 n^3$ FLOPS \cite[Section
7.5.6]{Golub2012}. In the setup of the FD method, {\R we instead need to compute the eigenvector matrix $U_l$.} 
The crucial step of this computation (see \cite[Section 7.6]{Golub2012}) is again the Schur decomposition, which has the same cost as above.

{\R Computation of each matrix $G_l = M_l^{-T} Q_l$, $l = 1,2$, requires the solution of $n$ linear systems involving the same system matrix. Since this matrix is banded with $O(p)$ bandwidth, the total cost of this operation is $O(n^2 p) = O(N p)$ FLOPS, which is negligible. Similarly, the cost of computing $V_l = (M_l U_l)^{-T}$, $l = 1,2$, is negligible.}

In both methods, the application of the preconditioner requires two matrix-vector products involving matrices in Kronecker form. By exploiting the properties of the Kronecker product (see \cite[Section 1.3.6]{Golub2012} for details), these operations can be reformulated in terms of a few matrix-matrix products, for a total cost of $8 n^3$ FLOPS. We emphasize that a matrix-matrix product is a level 3 BLAS operation and typically yields high efficiency on modern computers, see
\cite[Chapter 1]{Golub2012}. Application of the FD preconditioner also requires a diagonal scaling, which is inexpensive.
{\R In the BS approach, we instead need to solve a system of the form}
\begin{equation} \label{eq:block2D}
\left(R_2 \otimes I_n + I_n \otimes R_1\right) \y = \textbf{z}.
\end{equation}
with $\y,\textbf{z} \in \mathbb{R}^N$. We now give details on how this system is solved. If $\y = \left(y_1,\ldots,y_N\right)^T$ and $\textbf{z} = \left(z_1,\ldots,z_N\right)^T$, let $\y_k = \left(y_{(k-1)n + 1},\ldots,y_{kn}\right)^T$, $\textbf{z}_k = \left(z_{(k-1)n + 1},\ldots,z_{kn}\right)^T$ denote the $k$-th block of $n$ components of $\y$ and $\textbf{z}$, for $k = 1,\ldots,n$. Then the last block of $n$ equations of \eqref{eq:block2D} reads:
$$ \left(R_1 + \left(R_2\right)_{nn} I_n \right) \y_n = \textbf{z}_n,$$
which is an upper triangular system and can be solved by backsubstitution.

Similarly, once $\y_{k+1}, \ldots, \y_n$ are known, $\y_{k}$ can be recovered from the $k$-th
block of equations
$$ \left(R_1 + \left(R_2\right)_{kk} I_n \right) \y_k = \textbf{z}_k - \sum_{j = k+1}^n \left(R_2\right)_{kj} \textbf{z}_j. $$
The total cost of this process is roughly $2n^3$ FLOPS.

We now turn to the 3D case. The setup costs of the two methods are similar to the 2D case, that is $O(n^3) = O(N)$ FLOPS. We remark that, while for $d = 2$ this was likely the main computational effort of the methods, for $d = 3$ this cost is proportional to the number of degrees of freedom, so it is negligible. This fact makes the presented approaches more appealing in three dimensions than in two.

As for the application costs, computation of the matrix-vector products in and requires $12n^4$ FLOPS in both approaches. As in the 2D case, these products can be written in terms of highly efficient BLAS-3 operations.

This is roughly the total effort for the FD method, while in the BS method we have to add the cost of solving the system
\begin{equation} \label{eq:block3D}
\left(R_3 \otimes I_n \otimes I_n + I_n \otimes R_2 \otimes I_n + I_n \otimes I_n \otimes R_1 \right) \y = \textbf{z},
\end{equation}
with $\y, \textbf{z} \in \mathbb{R}^N$. This can done similarly as in previous case. Indeed, for $k = n, n - 1, \ldots, 1$ we consider the $k$-th block of $n^2$ equations:
$$ \left( \left(R_2 + \left(R_3\right)_{kk} I_n \right) \otimes I_n + I_n \otimes R_1 \right) \y_k = \textbf{z}_k - \sum_{j = k+1}^n \left(R_3\right)_{kj} \textbf{z}_j, $$
with $\y_k$ and $\textbf{z}_k$ properly defined. If the right-hand side is known, this system has the form \eqref{eq:block2D}, and can be solved as previously described. The total cost of this process is roughly $3n^4$ FLOPS.

We emphasize that all the relevant costs, in both 2D and 3D cases, are independent of $p$. This makes the considered approaches very appealing in the context of $k$-refinement.

We have observed that the BS method has a higher computational cost when compared to the FD method. In view of Remark 1, the BS method does not seem to offer any advantage that can motivate its employment in our setting. Therefore, only the FD method is considered in the rest of the paper. As a side note we also point out that, while the matrix-matrix products which are the computational core of the FD preconditioner are naturally parallelizable operations, the block backsubstitution process needed to solve \eqref{eq:block2D} and \eqref{eq:block3D} is not. This means that the FD method is more suited for parallelization than the BS method.

\section{Numerical tests} \label{sec:tests}

In this section, we show numerically the potential of the FD method described in the previous section.

All the numerical experiments are performed in {\sc Matlab} Version 8.5.0.197613 (R2015a), with the toolbox {\sc GeoPDEs} \cite{DeFalco2011}, on a Intel Xeon i7-5820K processor, running at 3.30 GHz, and with 64 GB of RAM. Since we do not explore parallelization in this work, only one core is used for the experiments.

All problems were solved with the BiCGStab method {\R \cite{VanderVorst1992}} preconditioned with $\p$ (implemented by the FD method). {\R We recall that each iteration of BiCGStab consists in a 
Biconjugate Gradient (BiCG)
step followed by a single iteration of GMRES.}
The generalized eigendecompositions \eqref{eq:eigendecomposition} are computed with the {\sc Matlab} function {\tt eig}. We remark that, even though in many cases the matrix pencils $(K_l,M_l)$ are the same for every $l = 1, \ldots, d$, we still compute the eigendecomposition for each $l$, so that the computational effort reflects the more general case where these pencils are different. In the 3D version of the algorithm, the products involving Kronecker matrices are performed using the function from the free {\sc Matlab} toolbox {\sc Tensorlab 3.0} \cite{Vervliet2016}.

For our experiments, we consider the problem \eqref{poisson} on one 2D domain and one 3D domain. These are a quarter of ring and the solid obtained from it by performing a $\pi/2$ revolution around the axis having direction $(0, 1, 0)^T$ and
passing through $(-1,-1,-1)^T$. Both domains are shown in Figure \ref{figure}. In all problems, we set $K(\mathbf{x}) \equiv I_d$.

\begin{figure}
\begin{center}
  \begin{minipage}[b]{0.40\textwidth}
		\includegraphics[trim=9cm 0cm 9cm 0cm, clip=true, width=\textwidth]{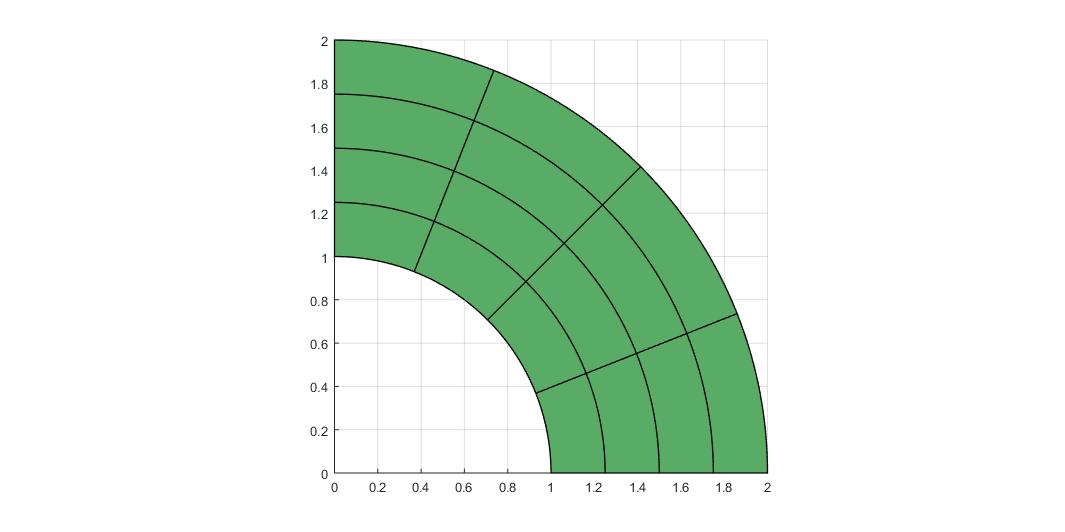}
  \end{minipage}
  \begin{minipage}[b]{0.55\textwidth}
    \includegraphics[trim=0cm 0cm 0cm 0cm, clip=true, width=\textwidth]{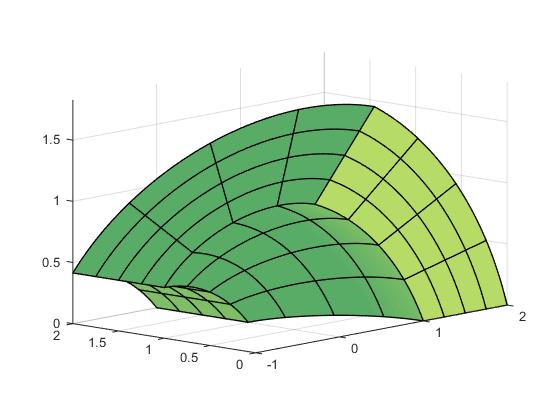}
  \end{minipage}
\caption{Quarter of ring and revolved quarter ring domains.}
\label{figure}
\end{center}
\end{figure}

As a benchmark, we compare the performance of the FD preconditioner with that of the ILU(0) preconditioner, i.e. an Incomplete LU factorization with zero fill-in. We remark that this preconditioner was already considered in the context
of isogeometric collocation in \cite{Schillinger2013,Anitescu2015}. We also remark that incomplete factorizations make good preconditioners for IGA problems obtained with $k-$refinement. Indeed, while it {\R is} known that such approaches are not robust with respect to $h$, they seem to be robust with respect to $p$. This was recognized numerically in \cite{Collier2013,Sangalli2016}, in the context of the Galerkin method. To improve the effectiveness of ILU(0), we use the reverse Cuthill-McKee algorithm \cite{George1981} (implemented by {\sc MATLAB} function {\tt symrcm}) to preliminary reorder the entries of system matrix, as we numerically verified that this is beneficial for the preconditioning strategy.

In Tables \ref{tab:colloc_quarter} and \ref{tab:colloc_revolved} we report the results, in terms of BiCGStab iterations and computation times, for the collocation approach. The results relative to the WQ Galerkin discretization are shown in Tables \ref{tab:WQ_quarter} and \ref{tab:WQ_revolved}. {\R When convergence occurs halfway through iteration $k$ (i.e. after the $k-$th BiCG step but before the $k-$th GMRES step), we report $k + 0.5$ as iteration number.} We remark that computation times also include the time spent for the setup of the preconditioner. 
The symbol ``*'' denotes the cases in which assembling the system matrix $\mathcal{A}$ was unfeasible due to memory limitations.

\begin{table}
\begin{center}
\renewcommand{\arraystretch}{1.1}
\footnotesize
\begin{tabular}{|r|c|c|c|c|}
\hline
& \multicolumn{4}{|c|}{BiCGStab + $\mathcal{P}$ Iterations / Time (sec)} \\
\hline
$h^{-1}$ & $p=2$ & $p=3$ & $p=4$ & $p=5$ \\
\hline
128 & 13.5 / \z0.05 & 13.5 / \z0.05 & 12.0 / \z0.06 & 12.0 / \z0.06 \\
\hline
256 & 13.5 / \z0.24 & 13.5 / \z0.25 & 13.5 / \z0.28 & 13.5 / \z0.29 \\
\hline
512 & 13.5 / \z1.59 & 13.5 / \z1.49 & 13.5 / \z1.58 & 13.5 / \z1.60 \\
\hline
1024 & 13.5 / 10.28 & 13.5 / 10.18 & 13.5 / 10.35 & 13.5 / 10.14 \\
\hline
\end{tabular}  

\vspace{0.5cm}

\begin{tabular}{|r|c|c|c|c|}
\hline
& \multicolumn{4}{|c|}{BiCGStab + ILU(0) Iterations / Time (sec)} \\
\hline
$h^{-1}$ & $p=2$ & $p=3$ & $p=4$ & $p=5$ \\
\hline
128 & \z64.0 / \z0.16 & \z69.5 / \z0.17 & \z39.5 / 0.16 & \z42.5 / \z0.17 \\
\hline
256 & 118.5 / \z1.03 & 134.5 / \z1.18 & \z82.5 / 1.20 & \z80.0 / \z1.18 \\
\hline
512 & 252.0 / \z8.92 & 243.5 / \z8.63 & 174.5 / 10.53 & 168.5 / 10.24 \\
\hline
1024 & 511.5 / 75.17 & 491.5 / 72.20 & 327.0 / 79.94 & 345.5 / 83.41 \\
\hline
\end{tabular}  
\caption{Collocation approach, quarter of ring domain. Number of iterations and CPU time
for BiCGStab preconditioned with FD (upper table) and ILU(0) (lower table).}
\label{tab:colloc_quarter}
\end{center}
\end{table}

\begin{table}
\begin{center}
\renewcommand{\arraystretch}{1.1}
\footnotesize
\begin{tabular}{|r|c|c|c|c|}
\hline
& \multicolumn{4}{|c|}{BiCGStab + $\mathcal{P}$ Iterations / Time (sec)} \\
\hline
$h^{-1}$ & $p=2$ & $p=3$ & $p=4$ & $p=5$ \\
\hline
128 & 16.0 / \z0.23 & 16.0 / \z0.23 & 16.0 / \z0.25 & 16.0 / \z0.36 \\
\hline
256 & 16.0 / \z0.54 & 16.0 / \z0.59 & 16.0 / \z0.75 & 16.0 / \z0.86 \\
\hline
512 & 16.0 / \z1.92 & 16.0 / \z2.35 & 16.0 / \z4.71 & 16.0 / \z5.73 \\
\hline
1024 & 16.0 / 14.12 & 16.0 / 12.83 & 16.0 / 20.97 & 16.0 / 23.57 \\
\hline
\end{tabular}  

\vspace{0.5cm}

\begin{tabular}{|r|c|c|c|c|}
\hline
& \multicolumn{4}{|c|}{BiCGStab + ILU(0) Iterations / Time (sec)} \\
\hline
$h^{-1}$ & $p=2$ & $p=3$ & $p=4$ & $p=5$ \\
\hline
128 & \z46.0 / \z0.16 & \z31.0 / \z\z0.23 & \z26.5 / \z\z0.52 & \z21.5 / \z\z0.60 \\
\hline
256 & \z86.5 / \z1.27 & \z61.5 / \z\z2.05 & \z49.0 / \z\z1.88 & \z41.0 / \z\z2.45 \\
\hline
512 & 167.0 / 12.46 & 130.0 / \z14.02 & 103.5 / \z21.24 & \z79.5 / \z20.06 \\
\hline
1024 & 349.0 / 88.72 & 266.5 / 100.10 & 222.0 / 158.12 & 173.0 / 142.86 \\
\hline
\end{tabular}  
\caption{WQ Galerkin approach, quarter of ring domain. Number of iterations and CPU time
for BiCGStab preconditioned with FD (upper table) and ILU(0) (lower table).}
\label{tab:WQ_quarter}
\end{center}
\end{table}

\begin{table}
\begin{center}
\renewcommand{\arraystretch}{1.1}
\footnotesize
\begin{tabular}{|r|c|c|c|c|}
\hline
& \multicolumn{4}{|c|}{BiCGStab + $\mathcal{P}$ Iterations / Time (sec)} \\
\hline
$h^{-1}$ & $p=2$ & $p=3$ & $p=4$ & $p=5$ \\
\hline
16 & 16.0 / 0.04 & 15.5 / 0.04 & 17.5 / 0.05 & 17.5 / 0.07 \\
\hline
32 & 16.5 / 0.11 & 18.5 / 0.13 & 20.5 / 0.32 & 22.0 / 0.40 \\
\hline
64 & 17.5 / 0.88 & 19.5 / 1.04 & 21.5 / 2.47 & 22.5 / 3.12 \\
\hline
\end{tabular}  

\vspace{0.5cm}

\begin{tabular}{|r|c|c|c|c|}
\hline
& \multicolumn{4}{|c|}{BiCGStab + ILU(0) Iterations / Time (sec)} \\
\hline
$h^{-1}$ & $p=2$ & $p=3$ & $p=4$ & $p=5$ \\
\hline
16 & 10.5 / 0.02 & 11.0 / 0.02 & \z6.0 / 0.11 & \z6.5 / \z0.18 \\
\hline
32 & 22.5 / 0.20 & 24.0 / 0.24 & 13.5 / 0.90 & 14.5 / \z1.17 \\
\hline
64 & 46.5 / 3.16 & 46.0 / 3.38 & 27.5 / 9.37 & 32.5 / 11.63 \\
\hline
\end{tabular}  
\caption{Collocation approach, revolved ring domain. Number of iterations and CPU time
for BiCGStab preconditioned with FD (upper table) and ILU(0) (lower table).}
\label{tab:colloc_revolved}
\end{center}
\end{table}

\begin{table}
\begin{center}
\renewcommand{\arraystretch}{1.1}
\footnotesize
\begin{tabular}{|r|c|c|c|c|}
\hline
& \multicolumn{4}{|c|}{BiCGStab + $\mathcal{P}$ Iterations / Time (sec)} \\
\hline
$h^{-1}$ & $p=2$ & $p=3$ & $p=4$ & $p=5$ \\
\hline
16 & 27.5 / \z0.10 & 27.0 / \z0.16 & 25.5 / \z\z0.24 & 23.5 / \z0.42 \\
\hline
32 & 29.5 / \z0.44 & 29.5 / \z1.08 & 29.5 / \z\z2.35 & 29.5 / \z4.48 \\
\hline
64 & 31.5 / \z4.17 & 26.5 / \z6.52 & 26.5 / \z13.69 & 26.0 / 24.80 \\
\hline
128 & 32.5 / 42.95 & 30.0 / 76.73 & 27.5 / 162.59 & * \\
\hline
\end{tabular}  

\vspace{0.5cm}

\begin{tabular}{|r|c|c|c|c|}
\hline
& \multicolumn{4}{|c|}{BiCGStab + ILU(0) Iterations / Time (sec)} \\
\hline
$h^{-1}$ & $p=2$ & $p=3$ & $p=4$ & $p=5$ \\
\hline
16 & \z7.0 / \z\z0.07 & \z4.5 / \z\z0.29 & \z3.5 / \z\z\z0.92 & \z2.5 / \z\z2.58 \\
\hline
32 & 15.0 / \z\z0.76 & 10.0 / \z\z4.40 & \z8.0 / \z\z11.48 & \z6.0 / \z23.58 \\
\hline
64 & 33.5 / \z15.03 & 22.5 / \z31.49 & 17.0 / \z\z75.72 & 13.5 / 189.71 \\
\hline
128 & 65.5 / 124.37 & 45.5 / 297.93 & 32.5 / 1043.45 & * \\
\hline
\end{tabular}  
\caption{WQ Galerkin approach, revolved ring domain. Number of iterations and CPU time
for BiCGStab preconditioned with FD (upper table) and ILU(0) (lower table).}
\label{tab:WQ_revolved}
\end{center}
\end{table}

The conclusions we can draw from the numerical results are similar for collocation and WQ Galerkin, and they mirror those made in \cite{Sangalli2016} for the symmetric FD preconditioner. First, we observe that the number of FD-preconditioned
BiCGStab iterations is almost constant with respect to $p$ and $h$. Second, the computation times of this approach scales very well in $h$. As a consequence, in all the considered cases the FD preconditioner outperforms ILU(0) for small enough
$h$. Third, while in the 2D case the computation time of the FD-preconditioned iteration increases only mildly with $p$, in the 3D case it increases significantly. This is due to the $O(Np^3)$ cost of the BiCGStab matrix-vector products, which
dominates in this case. In other words, the FD method is so efficient that its computational effort is negligible with respect to the overall iterative procedure.

\section{Conclusions} \label{sec:conclusions}

In this paper, we have addressed the problem of iteratively solving large linear systems that arise from the isogeometric discretization of the Poisson problem. More specifically, we have focused on the case when the system matrix $\mathcal{A}$ is obtained with a nonsymmetric discretization scheme, such as the collocation method or the weighted quadrature approximation of the Galerkin matrix.

We considered as preconditioner the matrix representing the same operator as $\mathcal{A}$, but with no geometry and coefficients, discussed its robustness in the context of approximated Galerkin matrices (which is the case for WQ), and showed how this preconditioner can be efficiently applied using the FD method. The numerical experiments confirm that, as in the symmetric case explored in \cite{Sangalli2016}, this preconditioning strategy is robust with respect to $h$ and $p$ and so efficient that the time spent in the application of the preconditioner is negligible with respect to that spent in the matrix-vector products of the chosen iterative solver. We also remark that this strategy can be combined with the domain decomposition approach described in {\R \cite{Sangalli2016}} to solve problems on {\R multi-patch} domains.

A current research direction is the theoretical study of the weighted quadrature, which as a by-product would lead to proving property \eqref{eq:property}. Another research direction is devoted to making the preconditioner $\p$ more robust
with respect to the geometry and the coefficients. Indeed, as it is can be deduced from bound \eqref{eq:bound}, the current approach likely deteriorates in the presence of an ill-conditioned (or singular) matrix $Q(\xi)$. Finally, we are
devising a matrix-free version of the WQ approach. Beside further reducing the time spent in the setup of the linear system, this approach could also be beneficial during the iterative solution process. Indeed, if the matrix-vector products
could be computed faster, this would lead to a further improvement, in terms of efficiency, of the overall iterative process.

\section*{Acknowledgments}
The author would like to thank Giancarlo Sangalli for fruitful discussions on the topics of the paper. The author was supported by the European Research Council through the FP7 Ideas Consolodator Grant HIGEOM n.616563. This support is gratefully acknowledged.

\bibliography{biblio}

\end{document}